\documentclass{article}
\usepackage{makeidx}
\makeindex
\usepackage{amsmath}
\usepackage{amsfonts}
\usepackage{amssymb}
\usepackage{amsthm}
\theoremstyle{plain}
\newtheorem{theorem}{Theorem}[section]
\newtheorem{lemma}[theorem]{Lemma}

\newtheorem{corollary}[theorem]{Corollary}
\theoremstyle{definition}

\theoremstyle{remark}
\newtheorem*{remark}{Remark}

\begin{document}

\title{New Constants in Discrete Lieb-Thirring Inequalities for Jacobi Matrices}
\author{Arman Sahovic}
\date{\today}
\maketitle

\begin{abstract}

\noindent
This paper is essentially derived from the observation that some results used for 
improving constants in the Lieb-Thirring inequalities for Schr\"odinger operators in $L^2(-\infty,\infty)$ can be translated to the discrete Schr\"odinger operators and more generally to Jacobi matrices. 
Some results were previously proved  by D. Hundertmark and B. Simon and the aim of this article is to improve the constants obtained in their article \cite{Hun}.
\end{abstract}

%%%%%%%%%%%%%%%%%%%%%%%%%%%%%%%%%%%%%%%%%%
%%%%%%%%%%%%%%%%%%%%%%%%%%%%%%%%%%%%%%%%%%

\section{Introduction and main results}

%%%%%%%%%%%%%%%%%%%%%%%%%%%%%%%%%%%%%%%%%

Let $W$ be a tridiagonal self-adjoint Jacobi matrix
\begin{equation*}
W = \left(
\begin{array}{cccccc}
\ddots & \vdots & \vdots & \vdots & \vdots & \ddots\\
\ldots & b_{-1} & a_{-1} & 0   & 0   & \ldots \\
\ldots & a_{-1} & b_0 & a_0 & 0   & \ldots \\
\ldots & 0   & a_0 & b_1 & a_1   & \ldots \\
\ldots & 0   & 0   & a_1 & b_2 & \ldots \\
\ddots & \vdots & \vdots & \vdots & \vdots & \ddots
\end{array} \right)
\end{equation*}
viewed as an operator in $\ell^2(\Bbb{Z})$  of complex sequences: 
\begin{equation}\label{W}
Wu(n)=a_{n-1}u(n-1)+b_nu(n)+a_nu(n+1), \qquad n\in\mathbb Z.
\end{equation}
In what follows we assume that $a_n>0$, $b\in\mathbb R$ and $a_n\to 1$, $b_n\to 0$, $n\to\pm\infty$, rapidly enough. Then the essential  spectrum $\sigma_{ess} = [-2,2]$ and $W$ may have simple eigenvalues $\{E_j^\pm\}_{j=1}^{N_\pm}$ where $N_\pm\in \mathbb N \cup \{\infty\}$,
$$
E_1^{+}>E_2^{+}>...>2>-2>...>E_2^{-}>E_1^{-}.
$$

\noindent Applying the method of D. Hundertmark, E. Lieb and L.Thomas  \cite{Tho} to Jacobi matrices,  D.Hundertmark and B.Simon  \cite{Hun} have proved the following result concerning Lieb-Thirring inequalities for Jacobi matrices:
\begin{theorem}[Hundertmark-Simon]\label{Hun-Sim}
Let $\{b_n\}, \, \{a_n-1\}\in l^{1}$. Then 
\begin{equation}\label{HS1}
\sum_{j=1}^{N_+} \big((E_j^+)^2 -4\big)^{1/2} + \sum_{j=1}^{N_-} \big((E_j^-)^2 -4\big)^{1/2}
\le \sum_{n=-\infty}^{\infty} |b_n| + 4\sum_{n=-\infty}^{\infty}|a_n-1|.
\end{equation}
Moreover, if $\{b_n\}, \, \{a_n-1\}\in l^{\gamma+1/2}$, $\gamma\geq1/2$, then
\begin{equation}\label{HS2}
\sum_{j=1}^{N_+}|E^+_j-2|^\gamma+|E^-_j+2|^\gamma \leq c_\gamma\left[\sum_{n=-\infty}^{\infty} |b_n|^{\gamma+1/2}
+4\sum_{n=-\infty}^{\infty}|a_n-1|^{\gamma+1/2}\right]
\end{equation}
where
% $$
% c_\gamma=\frac{3^{\gamma-1/2}\,\,\Gamma(\gamma+1)\Gamma(2)}{2\,\,\Gamma(\gamma+3/2)\Gamma(3/2)}.
% $$

$$
c_\gamma=2(3^{\gamma-1/2})L^{cl}_{\gamma,1}.
$$
where
$$
L^{cl}_{\gamma,1}=\frac{\Gamma(\gamma+1)}{2\sqrt{\pi}\,\,\Gamma(\gamma+3/2)}
$$

\end{theorem}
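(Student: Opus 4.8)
The plan is to combine the Birman--Schwinger principle with the Hundertmark--Lieb--Thomas (HLT) method, transported to the lattice. First I would reduce to the eigenvalues above the essential spectrum: the diagonal unitary $Uu(n)=(-1)^nu(n)$ makes $-UWU$ into the Jacobi matrix with diagonal $-b_n$ and off-diagonal $a_n$, and its eigenvalues are $\{-E_j^\pm\}$, so its \emph{upper} eigenvalues are the $-E_j^-$. Hence an upper-eigenvalue bound for a general Jacobi matrix, applied to $-UWU$, delivers the lower halves of \eqref{HS1} and \eqref{HS2} with $(b_n)_-$ in place of $(b_n)_+$, and the two halves add up to the stated inequalities. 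Writing $W=W_0+V$ with $(W_0u)(n)=u(n-1)+u(n+1)$ and $V=B+A$, where $B=\operatorname{diag}(b_n)$ and $A$ is the off-diagonal part carrying $a_n-1$, I would record the free resolvent: for $E=2\cosh t$, $t>0$,
\[
(E-W_0)^{-1}(n,m)=\frac{e^{-t|n-m|}}{2\sinh t},\qquad (E^2-4)^{1/2}=2\sinh t .
\]
This is the exact lattice analogue of the continuum kernel $e^{-\sqrt{e}\,|x-y|}/(2\sqrt e)$, with $\sinh t$ in the role of $\sqrt e$.

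For the critical inequality \eqref{HS1} it suffices to prove, for the upper eigenvalues alone, that $\sum_j((E_j^+)^2-4)^{1/2}\le\sum_n(b_n)_++2\sum_n|a_n-1|$. Setting $E_j^+=2\cosh t_j$ gives $((E_j^+)^2-4)^{1/2}=2\sinh t_j$, and the layer-cake formula
\[
\sum_j 2\sinh t_j=\int_0^\infty N(t)\,2\cosh t\,dt,\qquad N(t)=\#\{j:t_j\ge t\},
\]
reduces matters to the counting function. By Birman--Schwinger, $N(t)$ is the number of eigenvalues $\ge1$ of $K(t)=|V|^{1/2}(2\cosh t-W_0)^{-1}|V|^{1/2}$, whose kernel is the one above. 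The HLT method then controls $\int_0^\infty N(t)\,2\cosh t\,dt$ by the corresponding sum over the entries of $V$, exploiting that $e^{-t|n-m|}$ is of positive type so that no gain is lost between distinct sites. The coefficients are pinned by two elementary computations: a single $b_0>0$ produces one upper eigenvalue with $(E^2-4)^{1/2}=b_0=(b_0)_+$, while a single bond produces one with $(E^2-4)^{1/2}=|a_0-1|(1+e^{-t})\le2|a_0-1|$. Thus $b_n$ enters with weight $1$ and with its own sign, whereas the sign-indefinite bond term enters with weight $\sup_{t>0}(1+e^{-t})=2$ at each spectral end, which produces the $2$ here and the $4$ in \eqref{HS1}.

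For the moments \eqref{HS2} with $\gamma>1/2$ I would invoke the Aizenman--Lieb principle. With $e_j=E_j^+-2>0$,
\[
\sum_j e_j^\gamma=\frac{\Gamma(\gamma+1)}{\Gamma(\gamma-\tfrac12)\,\Gamma(\tfrac32)}\int_0^\infty s^{\gamma-3/2}\sum_j(e_j-s)_+^{1/2}\,ds,
\]
so everything reduces to the critical exponent $1/2$ at the shifted threshold $2+s$. The base case $\gamma=1/2$ is already in \eqref{HS1}: from $(E^2-4)^{1/2}\ge2(E-2)^{1/2}$ for $E>2$ one gets $\sum_j(E_j^+-2)^{1/2}\le\tfrac12[\sum_n(b_n)_++2\sum_n|a_n-1|]$, with $\tfrac12=c_{1/2}$. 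Carrying this $\tfrac12$ and the single-site weights through the $s$-integral reproduces, in the absence of any loss, exactly the constant $2L^{cl}_{\gamma,1}$, which is the value of $c_\gamma$ at the ideal $3^{\gamma-1/2}=1$.

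The hard step, and the source of the surplus $3^{\gamma-1/2}$, is the shift itself. In the continuum one replaces $V$ by $V-s$ and estimates $\int(V-s)_+^{\gamma+1/2}$; on the lattice this fails twice over, since subtracting $s$ from every $b_n$ destroys summability and a diagonal shift cannot touch the off-diagonal data, so that the $s$-independent term $2\sum|a_n-1|$ would be integrated against the divergent $\int_0^\infty s^{\gamma-3/2}\,ds$. I would therefore not shift the matrix but estimate $\sum_j(e_j-s)_+^{1/2}$ directly, running Birman--Schwinger for $W$ at energies $E>2+s$ and comparing the discrete resolvent diagonal $\big(s(s+4)\big)^{-1/2}$ with the continuum value $(2\sqrt s)^{-1}$. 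This comparison is lossy, and under the scaling built into the $s$-integral the loss condenses into a fixed power; optimising it over the admissible energies yields precisely $3^{\gamma-1/2}$, whence $c_\gamma=2\cdot3^{\gamma-1/2}L^{cl}_{\gamma,1}$. Establishing this comparison cleanly is the genuine obstacle, and because it is visibly not sharp it is exactly the point at which the present paper sets out to lower the constant.
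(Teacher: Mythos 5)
A preliminary remark: the paper does not prove this theorem at all --- it is quoted verbatim from Hundertmark--Simon \cite{Hun} --- so your attempt can only be measured against their argument, whose skeleton the paper reuses later in its proof of Theorem \ref{main}. Your reductions are sound as far as they go: the sign-flip unitary $Uu(n)=(-1)^nu(n)$, the lattice resolvent kernel $e^{-t|n-m|}/(2\sinh t)$ at $E=2\cosh t$, the layer-cake identity $\sum_j 2\sinh t_j=\int_0^\infty N(t)\,2\cosh t\,dt$, and the one-sided target $\sum_j((E_j^+)^2-4)^{1/2}\le\sum_n(b_n)_++2\sum_n|a_n-1|$ are all correct, as is the computation $c_{1/2}=1/2$. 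But the entire content of \eqref{HS1} is the sentence ``the HLT method then controls $\int_0^\infty N(t)\,2\cosh t\,dt$ by the corresponding sum over the entries of $V$,'' and that step is asserted rather than performed. Pinning the coefficients by evaluating two rank-one examples is not a proof: an extremal configuration can at best show a constant is not improvable, never that it suffices for all potentials. Moreover the Birman--Schwinger operator you write down has a non-diagonal, sign-indefinite perturbation $A$ carrying the $a_n-1$, and the HLT averaging argument does not apply to it directly; Hundertmark and Simon need the $2\times2$ matrix inequality \eqref{impbound}, which converts the off-diagonal data into the effective diagonal potentials $b_n^{\pm}=b_n\pm(|a_{n-1}-1|+|a_n-1|)$ before any Birman--Schwinger analysis is run. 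That device is absent from your outline, and without it the plan stalls at the first serious step.

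The second gap is that you misidentify the origin of the factor $3^{\gamma-1/2}$, and the route you propose in its place is speculative. The factor has nothing to do with comparing the discrete resolvent diagonal $(s(s+4))^{-1/2}$ with the continuum value $(2\sqrt{s})^{-1}$, and the claim that optimising such a comparison ``yields precisely $3^{\gamma-1/2}$'' is unsupported by any computation. The actual mechanism is elementary convexity: after reducing to $a_n\equiv1$ one proves the diagonal moment bound with constant $2L^{cl}_{\gamma,1}$ by Aizenman--Lieb, and only then pays $(x+y+z)^{\gamma+1/2}\le 3^{\gamma-1/2}\bigl(x^{\gamma+1/2}+y^{\gamma+1/2}+z^{\gamma+1/2}\bigr)$ on the three summands of $b_n^{\pm}$; in particular, for $a_n\equiv1$ there is no factor $3^{\gamma-1/2}$ at all, which your explanation cannot accommodate. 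Your stated obstruction to Aizenman--Lieb is also illusory: one does not subtract $s$ from every $b_n$ (which would indeed destroy summability) but replaces $b_n$ by the truncation $(b_n-s)_+$, which is summable and compactly supported in $s$ for each $n$, and the off-diagonal data has already been absorbed into the diagonal by that stage. So the correct order of operations --- reduce via \eqref{impbound}, run Aizenman--Lieb on the purely diagonal problem, then apply the three-term Jensen inequality --- both closes your second step and removes the ``genuine obstacle'' you describe.
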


\begin{remark}
Note that the constants in front of each sum in the right hand side of \eqref{HS1} are sharp and this is the only case when sharp constants in Lieb-Thirring inequalities are known for Jacobi matrices. 
\end{remark}

This paper is one of a series of papers where we would like to obtain improved and possibly sharp constants for discrete operators. 
In particular, the aim of this paper is to improve the constants $c_\gamma$ appearing in \eqref{HS2} for $\gamma\ge1$ by applying the  method of A. Eden and C. Foias, \cite{Ede} who obtained improvement of constants in Lieb-Thirring inequalities for one-dimensional Schr\"odinger operators acting in $L^2(\Bbb R)$. 

Let us first recall known results for  ``continuous" multi-dimensional Scr\"odinger operators that give us a motivation for the study of discrete problems.
Let $H$ be a Schr\"odinger operator acting in  $L^2(\mathbb R^d)$ 
        \begin{equation}\label{Schrodinger}
       H \psi_j:=  -\Delta \psi_j(x) + V\psi_j(x) = -e_j \psi_j(x).  
        \end{equation}
Lieb-Thirring inequalities relate the eigenvalues $\{e_j\}$ of the operator $H$ and the potenvial $V\in L^{\gamma+n/2}(\Bbb{R}^d)$ via the following estimate
        \begin{equation}\label{LTI}
        \sum_j|e_j|^{\gamma}  \leq  L_{\gamma,d}\int V_-(x)^{\gamma+d/2}d x,
        \end{equation} 
where $V_- = (|V|-V)/2$ is the negative part of $V$.

\noindent   
It is known that the constants $L_{\gamma,d}$ are finite if $\gamma\geq1/2$ $(d=1)$, 
$\gamma>0$ ($d=2)$, and $\gamma\geq0 $ ($d\geq3$).
If $\gamma=0$ ($d\geq 3$) the inequality \eqref{LTI} is called the CLR-inequality  (Cwikel-Lieb-Rozenblum). The case $\gamma=1/2$ ($d=1$) was justified by T. Weidl in \cite{W}. 
In all these cases we have the following Weyl asymptotic formula for the eigenvalues of the operator 
$H(\alpha) =  -\Delta + \alpha V$ 
\begin{multline}\label{asymp}
 \sum_j|e_j|^{\gamma}  = \alpha^{\gamma + d/2} \, (2\pi)^{-d}\, \int\int (|\xi|^2 + V(x))_-^\gamma
 dx d\xi  + o\,(\lambda^{\gamma + d/2})\\
 =  \alpha^{\gamma + d/2} L_{\gamma,d}^{cl} \int V_-(x)^{\gamma+d/2}d x + 
 o\,(\lambda^{\gamma + d/2}),
 \quad {\mathrm as} \quad \alpha \to \infty,
\end{multline}
where 
$$
L_{\gamma,d}^{cl} = (2\pi)^{-d}\,\int (|\xi|^2 -1)_-^\gamma d\xi.
$$
Therefore the sharpness of the constants  $L_{\gamma,d}$ appearing in \eqref{LTI} could be compared with the values of $L_{\gamma,d}^{cl}$. Clearly \eqref{asymp} implies that 
$L_{\gamma,d}\leq L_{\gamma,d}^{cl}$.

\noindent
In some cases the values of sharp constants $L_{\gamma,d}$ are known. However, they do not always coincide with  $L_{\gamma,d}^{cl}$.
It has been proved in \cite{LTh} that  $L_{3/2,1} = 3/16$ and by using \cite{AL} one obtains sharp 
constants $L_{\gamma,1}$ for all $\gamma\geq 3/2$. Later A.  Laptev and T. Weidl \cite{LaWe} obtained sharp constants for $L_{\gamma,d}$ for all $\gamma\geq 3/2$ in any dimension.  
If $\gamma=1/2$ and $d=1$  then  $L_{1/2,1} = 1/2 $ was found by D. Hundertmark, E.B. Lieb and L. Thomas in \cite{Tho}. 

Several attempts have been made to improve estimates for the constants  $L_{\gamma,d}$.
For $1/2\leq\gamma<3/2$, Hundertmark, Laptev and Weidl \cite{Wei} found the constant to
be not greater than $2 \,L_{\gamma,d}^{cl}$. Recently  this has been improved for $1\leq \gamma\leq 3/2$ by J. Dolbeault, A. Laptev and  M. Loss \cite{Dol} to $c\,L_{\gamma,d}^{cl}$, $c=1.8...$, using methods essentially derived from Eden and Foias \cite{Ede}, which we will use ourselves in quite a substantial way.
Eden and Foias essentially used an interesting method to improve the constant in the Lieb-Thirring inequalities in one dimension.

%%%%%%%%%%%%%%%%%%%%%%%%%%%%%%%%%%%
%%%%%%%%%%%%%%%%%%%%%%%%%%%%%%%%%%%

\noindent
Let us now introduce the operator $D$ in $l^2(\mathbb Z)$
\begin{equation} \label{ddo}
D {\varphi(n)} = {\varphi(n+1)-\varphi(n)}, \qquad n\in\mathbb Z.
\end{equation}
\noindent 
We choose its adjoint to be:
$$
D^* \varphi(n) =-( \varphi(n)-\varphi(n-1)).
$$
\noindent Then the discrete Laplacian takes the form:
$$
D^*D\,\varphi(n) = -\varphi(n+1)-\varphi(n-1)+2\varphi(n).
$$
The spectrum $\sigma(D^*D)$ of this operator  is absolutely continuous. We have
$\sigma(D^*D) = [0,4]$ and if $b_n\ge 0$ then the discrete Schr\"odinger operator 
\begin{equation}\label{Sch}
H_D:=D^*D  - b_n
\end{equation}
may have negative eigenvalues.

\noindent
Our first result is:

%%%%%%%%%%%%%%%%%%%%%%%%%%%%%
%
\begin{theorem}\label{Lieb-Thirring}
Let $b_n\geq 0$, $\{b_n\}_{n=-\infty}^\infty \in l^{3/2}(\mathbb Z)$. Then the
negative eigenvalues $\{e_j\}$ of the operator in \eqref{Sch} are discrete
and they satisfy the inequality 
\begin{equation}\label{eq:lieb}
\sum_{j}\,|e_j |\le\, \frac{\pi}{\sqrt3}\,\,L^{cl}_{1,1}\sum_{n\in\Bbb{Z}}\, b_n^{3/2}.
\end{equation}
\end{theorem}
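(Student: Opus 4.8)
The plan is to transcribe, to the discrete setting, the variational scheme Eden and Foias use for $-d^2/dx^2$ on $L^2(\mathbb R)$: reduce the eigenvalue sum to a single \emph{collective} kinetic-energy inequality, and then close with a pointwise Legendre transform. First I record the spectral setup. Since $b_n\ge0$ and $\{b_n\}\in\ell^{3/2}$ we have $b_n\to0$, so multiplication by $\{b_n\}$ is compact; hence $\sigma_{ess}(H_D)=\sigma(D^*D)=[0,4]$ and the negative spectrum consists of isolated eigenvalues $e_j<0$ of finite multiplicity, accumulating at most at $0$. Because the right-hand side of \eqref{eq:lieb} does not depend on the number of eigenvalues, it suffices to bound each finite partial sum; so I fix orthonormal eigenfunctions $\psi_1,\dots,\psi_N$ with $H_D\psi_j=e_j\psi_j$, set $\rho(n):=\sum_{j=1}^N|\psi_j(n)|^2$, and let $N\to\infty$ at the end. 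Using $\langle H_D\psi_j,\psi_j\rangle=e_j$ together with $\langle D^*D\psi_j,\psi_j\rangle=\|D\psi_j\|^2$ gives the trace identity
\begin{equation*}
\sum_{j=1}^N|e_j|=\sum_{n\in\mathbb Z}b_n\,\rho(n)-\sum_{j=1}^N\|D\psi_j\|^2 .
\end{equation*}

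Granting the collective kinetic inequality $\sum_{j=1}^N\|D\psi_j\|^2\ge\sum_{n}\rho(n)^3$, the identity yields $\sum_j|e_j|\le\sum_n\bigl(b_n\rho(n)-\rho(n)^3\bigr)$. Optimizing each summand over $\rho(n)\ge0$ via $\max_{t\ge0}(b_nt-t^3)=\tfrac{2}{3\sqrt3}\,b_n^{3/2}$ (attained at $t=\sqrt{b_n/3}$) gives $\sum_j|e_j|\le\tfrac{2}{3\sqrt3}\sum_n b_n^{3/2}$. Finally $L^{cl}_{1,1}=\Gamma(2)/(2\sqrt\pi\,\Gamma(5/2))=2/(3\pi)$, so $\tfrac{2}{3\sqrt3}=\tfrac{\pi}{\sqrt3}L^{cl}_{1,1}$, which is exactly the constant claimed; letting $N\to\infty$ completes the proof. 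This step is routine, and note that it forces the constant in the kinetic inequality to be exactly $1$.

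The entire difficulty is thus concentrated in the discrete analogue of the Eden--Foias bound $\sum_j\|\psi_j'\|^2\ge\int\rho^3$, namely $\sum_j\|D\psi_j\|^2\ge\sum_n\rho(n)^3$. The natural starting point is a pointwise Cauchy--Schwarz estimate: writing $\Delta\rho(n)=\rho(n+1)-\rho(n)$ and using $\bigl|\,|\psi_j(n+1)|-|\psi_j(n)|\,\bigr|\le|\psi_j(n+1)-\psi_j(n)|$ together with $\bigl(|\psi_j(n+1)|+|\psi_j(n)|\bigr)^{2}\le2\bigl(|\psi_j(n+1)|^2+|\psi_j(n)|^2\bigr)$ yields
\begin{equation*}
\sum_{j}|\psi_j(n+1)-\psi_j(n)|^2\ \ge\ \frac{\bigl(\rho(n+1)-\rho(n)\bigr)^2}{2\,\bigl(\rho(n+1)+\rho(n)\bigr)},
\end{equation*}
the exact counterpart of $\sum_j|\psi_j'|^2\ge(\rho')^2/(4\rho)$. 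I also record the purely discrete fact that, by Bessel's inequality, $\rho(n)=\sum_j|\langle\psi_j,\delta_n\rangle|^2\le1$ for every $n$; this pointwise ceiling has no continuous analogue and should streamline the constant bookkeeping. For the rearrangement one may work with the summation-by-parts form $\sum_j\|D\psi_j\|^2=2N-2\,\mathrm{Re}\sum_{n}\sum_{j}\psi_j(n+1)\overline{\psi_j(n)}$ and the representation $D^*D=2-S-S^*$, with $S$ the shift, replacing integration by parts.

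The hard part is that this $\sqrt\rho$-type bound alone is not sharp enough to produce the cubic right-hand side with constant $1$: one must exploit the orthonormality of $\{\psi_j\}$ beyond what is visible through $\rho$ — in Eden--Foias, via a comparison of $\rho$ against a one-parameter family of trial profiles after monotone rearrangement. The plan is to transcribe that comparison to the forward difference $D$ and to show that the constant $1$ survives discretization; the danger is that the bounded dispersion $\varepsilon(k)=2-2\cos k$ of $D^*D$ inflates it. Verifying that it does not — plausibly helped by the ceiling $\rho(n)\le1$, which confines $\varepsilon$ to the regime where $2-2\cos k\ge k^2-\tfrac{1}{12}k^4$ is most favourable — is the delicate point I expect to require the most care.
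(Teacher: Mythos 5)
Your reduction is the right one and matches the paper's overall strategy: the trace identity $\sum_j|e_j|=\sum_n b_n\rho(n)-\sum_j\|D\psi_j\|_{\ell^2}^2$, the collective bound $\sum_j\|D\psi_j\|_{\ell^2}^2\ge\sum_n\rho(n)^3$, and then an optimization producing $\tfrac{2}{3\sqrt3}\sum_n b_n^{3/2}$ (you optimize pointwise in $\rho(n)$; the paper applies H\"older and optimizes globally in $X=(\sum_n\rho^3)^{1/3}$ — both give the same constant). The identification $\tfrac{2}{3\sqrt3}=\tfrac{\pi}{\sqrt3}L^{cl}_{1,1}$ is also correct.

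However, the proof is not complete: the entire content of the theorem sits in the collective kinetic inequality with constant exactly $1$, and you leave it unproven, explicitly flagging doubt about whether that constant survives discretization. Moreover, the route you sketch toward it — the pointwise bound $\sum_j|D\psi_j(n)|^2\ge(\rho(n+1)-\rho(n))^2/\bigl(2(\rho(n+1)+\rho(n))\bigr)$ followed by a rearrangement comparison and an analysis of the dispersion relation $2-2\cos k$ — is not how the constant $1$ is obtained even in the continuum, and it is far from clear it would close. The actual argument is short and purely algebraic, and discretizes with no loss. First prove a discrete Agmon inequality, $|\varphi(n)|^2\le\|\varphi\|_{\ell^2}\|D\varphi\|_{\ell^2}$ for every $n$, by telescoping $\varphi^2$ from $\pm\infty$ to $n$ and applying Cauchy--Schwarz. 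Then apply it to $\varphi=\sum_j\xi_j\psi_j$, use orthonormality to bound the first factor by $\bigl(\sum_j|\xi_j|^2\bigr)^{1/4}$, and choose $\xi_j=\overline{\psi_j(n)}$: this gives $\rho(n)\le\rho(n)^{1/4}\bigl(\sum_{j,k}\psi_j(n)\overline{\psi_k(n)}\langle D\psi_j,D\psi_k\rangle\bigr)^{1/4}$, hence $\rho(n)^3\le\sum_{j,k}\psi_j(n)\overline{\psi_k(n)}\langle D\psi_j,D\psi_k\rangle$. Summing over $n$ and using $\sum_n\psi_j(n)\overline{\psi_k(n)}=\delta_{jk}$ collapses the double sum to $\sum_j\|D\psi_j\|_{\ell^2}^2$, yielding the inequality with constant $1$; no rearrangement, no dispersion estimate, and no use of the ceiling $\rho(n)\le1$ is needed. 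Without this (or an equivalent) argument your write-up establishes only the conditional statement.
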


%%%%%%%%%%%%%%%%%%%%%%%%%%%%%

\noindent
The standard argument due to Aizenman and Lieb \cite{AL} implies that we obtain the following
spectral inequalities for moments $\gamma\geq 1$. 

%%%%%%%%%%%%%%%%%%%%%%%%%%%%%

\begin{theorem}\label{AisLieb}
Let $b_n\geq 0$, $\{b_n\}_{n=-\infty}^\infty \in l^{\gamma+1/2}(\mathbb Z)$, $\gamma \ge 1$. Then the negative eigenvalues  $\{e_j\}$ of the operator in \eqref{Sch}  satisfy the inequality 
\begin{equation}\label{eq:lieb}
\sum_{j}\,|e_j |^\gamma \le\,C \,\,\sum_{n\in\Bbb{Z}}\, b_n^{\gamma + 1/2},
\end{equation}
where as in the continuous case
$$
C = \frac{\pi}{\sqrt 3}\, L_{\gamma,1}^{cl}.
$$
\end{theorem}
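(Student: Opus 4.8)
The plan is to lift the sharp $\gamma=1$ estimate of Theorem~\ref{Lieb-Thirring} to every $\gamma\ge 1$ by the Aizenman--Lieb iteration \cite{AL}, which raises the power of the moment at the cost of a purely combinatorial (Beta-function) factor. The starting point is the elementary layer-cake identity, valid for $\gamma>1$ and $\lambda\ge 0$,
\begin{equation*}
\lambda^{\gamma}=\frac{1}{B(\gamma-1,2)}\int_0^\infty (\lambda-t)_+\,t^{\gamma-2}\,dt,
\end{equation*}
which I would verify by the substitution $t=\lambda s$. Applying this with $\lambda=|e_j|$ and summing over the negative eigenvalues gives
\begin{equation*}
\sum_j|e_j|^{\gamma}=\frac{1}{B(\gamma-1,2)}\int_0^\infty\Big(\sum_j(|e_j|-t)_+\Big)\,t^{\gamma-2}\,dt,
\end{equation*}
the interchange of sum and integral being justified by Tonelli, since every term is nonnegative.

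The crux is to control the inner sum $\sum_j(|e_j|-t)_+$ for each fixed $t>0$ by the $\gamma=1$ bound. Because $e_j<0$, one has $(|e_j|-t)_+=|e_j+t|$ precisely for those $j$ with $e_j<-t$, and these are exactly the negative eigenvalues of the shifted operator $H_D+t=D^*D-(b_n-t)$. This shifted operator has an indefinite ``potential'' $b_n-t$, so Theorem~\ref{Lieb-Thirring} does not apply to it directly. I would therefore compare it with $D^*D-(b_n-t)_+$: since $(b_n-t)_+\ge b_n-t$ pointwise, the operator inequality $D^*D-(b_n-t)_+\le H_D+t$ holds, and the min-max principle then yields, eigenvalue by eigenvalue, $\sum_j(|e_j|-t)_+\le\sum_k|\mu_k(t)|$, where $\{\mu_k(t)\}$ are the negative eigenvalues of $D^*D-(b_n-t)_+$. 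The latter operator has the genuinely nonnegative potential $(b_n-t)_+$, so Theorem~\ref{Lieb-Thirring} applies and gives $\sum_k|\mu_k(t)|\le\frac{\pi}{\sqrt3}L^{cl}_{1,1}\sum_n(b_n-t)_+^{3/2}$.

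Substituting this bound back and again invoking Tonelli, I would reduce the potential side to the single-site integral $\int_0^\infty (b_n-t)_+^{3/2}\,t^{\gamma-2}\,dt=B(\gamma-1,5/2)\,b_n^{\gamma+1/2}$, evaluated by the same Beta-function substitution. Collecting constants leaves
\begin{equation*}
\sum_j|e_j|^{\gamma}\le\frac{\pi}{\sqrt3}\,L^{cl}_{1,1}\,\frac{B(\gamma-1,5/2)}{B(\gamma-1,2)}\sum_n b_n^{\gamma+1/2},
\end{equation*}
and the final task is to check that the factor $L^{cl}_{1,1}\,B(\gamma-1,5/2)/B(\gamma-1,2)$ collapses to $L^{cl}_{\gamma,1}$. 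Writing $B(a,b)=\Gamma(a)\Gamma(b)/\Gamma(a+b)$, the common $\Gamma(\gamma-1)$ cancels and the ratio becomes $\big(\Gamma(5/2)\Gamma(\gamma+1)\big)/\big(\Gamma(2)\Gamma(\gamma+3/2)\big)$; together with $L^{cl}_{1,1}=2/(3\pi)$ and $\Gamma(5/2)=\tfrac34\sqrt\pi$ this produces exactly $L^{cl}_{\gamma,1}=\Gamma(\gamma+1)/\big(2\sqrt\pi\,\Gamma(\gamma+3/2)\big)$, giving the stated constant $C=\frac{\pi}{\sqrt3}L^{cl}_{\gamma,1}$.

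I expect the main obstacle to be the comparison step: one must argue carefully that passing from $H_D+t$ to $D^*D-(b_n-t)_+$ only increases the sum of absolute values of negative eigenvalues, which requires the correct direction of the min-max inequality and the observation that the second operator has at least as many negative eigenvalues as the first. The Beta-function bookkeeping, while it must be carried out exactly to recover the advertised $\gamma$-dependence, is otherwise routine.
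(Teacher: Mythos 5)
Your proposal is correct and follows essentially the same route as the paper: the Aizenman--Lieb identity $\lambda^{\gamma}=\mathcal B^{-1}(\gamma-1,2)\int_0^\infty(\lambda-\tau)_+\tau^{\gamma-2}\,d\tau$, the variational comparison of $H_D+\tau$ with $D^*D-(b_n-\tau)_+$ so that Theorem \ref{Lieb-Thirring} applies, and the Beta-function evaluation $\int_0^\infty(b_n-\tau)_+^{3/2}\tau^{\gamma-2}\,d\tau=\mathcal B(\gamma-1,5/2)\,b_n^{\gamma+1/2}$, with the constants collapsing to $\frac{\pi}{\sqrt3}L^{cl}_{\gamma,1}$ exactly as you compute. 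Your treatment of the comparison step is in fact slightly more careful than the paper's one-line appeal to the variational principle.
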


%%%%%%%%%%%%%%%%%%%%%%%%%%%%%%

\noindent
By changing sign of the operator we immediately obtain a version of Theorem \ref{AisLieb} for positive eigenvalues of the operator $-D^*D +b_n$, $b_n\geq 0$. 

%%%%%%%%%%%%%%%%%%%%%%%%%%%%%%

\begin{corollary}\label{AisLieb+}
Let $b_n\geq 0$, $\{b_n\}_{n=-\infty}^\infty \in l^{\gamma+1/2}(\mathbb Z)$, $\gamma \ge 1$. Then the positive eigenvalues  $\{e_j\}$ of the operator $-D^*D +b_n$  satisfy the inequality 
\begin{equation}\label{eq:lieb+}
\sum_{j}\,|e_j |^\gamma \le\,C \,\,\sum_{n\in\Bbb{Z}}\, b_n^{\gamma + 1/2},
\end{equation}
where as in the continuous case 
$$
C = \frac{\pi}{\sqrt 3}\, L_{\gamma,1}^{cl}.
$$
\end{corollary}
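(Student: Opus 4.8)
The plan is to obtain the corollary directly from Theorem \ref{AisLieb} by reversing the sign of the operator, so that essentially no new estimate is required. Denote by $A := -D^*D + b_n$ the operator appearing in the statement, and observe that
$$
-A = D^*D - b_n = H_D
$$
is exactly the operator \eqref{Sch} built from the \emph{same} nonnegative sequence $\{b_n\}_{n=-\infty}^\infty \in l^{\gamma+1/2}(\mathbb Z)$. Since multiplication by $-1$ leaves eigenvectors unchanged and maps every eigenvalue to its negative, a real number $e$ is a positive eigenvalue of $A$ if and only if $-e$ is a negative eigenvalue of $H_D$; this gives a bijection between the relevant parts of the two spectra that preserves multiplicities.

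The second step is to record that this bijection leaves both sides of the desired inequality invariant. On the left, if $\{e_j\}$ enumerates the positive eigenvalues of $A$, then $\{-e_j\}$ enumerates the negative eigenvalues of $H_D$, and $|-e_j|^\gamma = |e_j|^\gamma$, so the $\gamma$-moment sums coincide. On the right, the hypotheses $b_n \geq 0$ and $\{b_n\} \in l^{\gamma+1/2}(\mathbb Z)$ are identical to those of Theorem \ref{AisLieb}, the quantity $\sum_n b_n^{\gamma+1/2}$ is unchanged, and the constant $C = \frac{\pi}{\sqrt 3}\, L_{\gamma,1}^{cl}$ is the same. Applying Theorem \ref{AisLieb} to $H_D$ therefore yields
$$
\sum_j |e_j|^\gamma = \sum_j |-e_j|^\gamma \le C \sum_{n \in \mathbb Z} b_n^{\gamma+1/2},
$$
which is precisely \eqref{eq:lieb+}.

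There is essentially no genuine obstacle here; the only point deserving a word of care is the claim that the positive eigenvalues of $A$ (equivalently the negative eigenvalues of $H_D$) are really discrete, so that the sum on the left is well defined and the enumeration makes sense. This follows because the decay hypothesis $\{b_n\} \in l^{\gamma+1/2}(\mathbb Z)$ forces $b_n \to 0$, so the diagonal multiplication operator $b_n$ is compact and does not change the essential spectrum $\sigma_{ess}(D^*D) = [0,4]$. Hence $H_D$ has essential spectrum $[0,4]$ and its negative eigenvalues lie below it and are discrete, exactly as in the setting of Theorem \ref{AisLieb}; correspondingly $A = -H_D$ has essential spectrum $[-4,0]$ and discrete positive eigenvalues, completing the reduction.
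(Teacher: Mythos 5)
Your proposal is correct and is exactly the paper's argument: the corollary is obtained by changing the sign of the operator, identifying the positive eigenvalues of $-D^*D+b_n$ with the negatives of the negative eigenvalues of $D^*D-b_n$, and applying Theorem \ref{AisLieb}. You merely spell out the bijection of spectra and the discreteness point, which the paper leaves implicit.
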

%%%%%%%%%%%%%%%%%%%%%%%%%%%%%%%

\noindent
Our main result for Jacobi matrices is the following statement:
%%%%%%%%%%%%%%%%%%%%%%%%%%%%%%%

\begin{theorem}\label{main}
Let $\gamma\geq1,\,\,\{b_n\}_{n=-\infty}^\infty, \, \{a_n-1\}_{n=-\infty}^\infty \in l^{\gamma+1/2}$.  Then for the eigenvalues of the operator \eqref{W} we have 
\begin{equation}\label{AS}
\sum_{j}|E^{-}_{j}+2|^\gamma+|E^{+}_{j}-2|^\gamma \leq 3^{\gamma-1/2}\frac{\pi}{\sqrt{3}}\,L^{cl}_{\gamma,1}
\left(\sum_n |b_n|^{\gamma+1/2}+4\sum_n |a_n-1|^{\gamma+1/2})\right).
\end{equation}
%.
\end{theorem}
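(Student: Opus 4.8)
The plan is to reduce the Jacobi matrix problem to the discrete Schr\"odinger case already settled in Corollary \ref{AisLieb+}, by separating the diagonal perturbation $b_n$ from the off-diagonal perturbation $a_n-1$ and controlling the latter by a diagonal operator. I would start from the identity $W_0 = 2 - D^*D$ for the free Jacobi matrix ($a_n\equiv 1$, $b_n\equiv 0$), so that $W - 2 = -D^*D + B + A$, where $B$ is the diagonal operator with entries $b_n$ and $A$ is the self-adjoint off-diagonal operator $(Au)(n) = (a_{n-1}-1)u(n-1) + (a_n-1)u(n+1)$. Since $\sigma_{ess}(W)=[-2,2]$, each eigenvalue $E_j^+>2$ of $W$ is precisely a positive eigenvalue of $W-2$, with $|E_j^+-2| = E_j^+-2$.

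The key analytic step is the operator bound for the off-diagonal part. Writing the quadratic form as $\langle Au,u\rangle = \sum_n 2(a_n-1)\,\mathrm{Re}\big(u(n+1)\overline{u(n)}\big)$ and applying $2|a_n-1|\,|u(n)|\,|u(n+1)| \le |a_n-1|\big(|u(n)|^2+|u(n+1)|^2\big)$, one obtains after reindexing $\langle Au,u\rangle \le \langle \tilde B u,u\rangle$, where $\tilde B$ is the nonnegative diagonal operator with entries $|a_n-1|+|a_{n-1}-1|$; the same bound holds for $-A$. Combining this with $B\le B_+$ (the positive part) gives the operator inequality $W-2 \le -D^*D + V$ with the nonnegative diagonal potential $V_n = (b_n)_+ + |a_n-1| + |a_{n-1}-1|$. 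By the min-max principle the positive eigenvalues of $W-2$ are dominated by those of $-D^*D+V$, so Corollary \ref{AisLieb+} yields
\begin{equation*}
\sum_j |E_j^+-2|^\gamma \le \frac{\pi}{\sqrt 3}\,L^{cl}_{\gamma,1}\sum_n\big((b_n)_+ + |a_n-1| + |a_{n-1}-1|\big)^{\gamma+1/2}.
\end{equation*}
For the eigenvalues $E_j^-<-2$ I would run the identical argument on the reflected operator $-UWU^{-1}$, where $U\varphi(n)=(-1)^n\varphi(n)$: conjugation by $U$ flips the sign of the off-diagonal entries while fixing the diagonal, so $-UWU^{-1}$ is again a Jacobi matrix with free part $W_0$, diagonal $-b_n$ and off-diagonals $a_n$. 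Its positive-eigenvalue analysis reproduces the bound above with $(b_n)_+$ replaced by $(b_n)_-=(-b_n)_+$, because $|E_j^-+2|$ is exactly a positive eigenvalue of $-UWU^{-1}-2$.

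Finally I would insert the power-mean inequality $(x+y+z)^p \le 3^{\,p-1}(x^p+y^p+z^p)$ with $p=\gamma+\tfrac12\ge\tfrac32$ into each three-term potential and sum over $n$. Shifting the index in the $|a_{n-1}-1|^{\gamma+1/2}$ sum makes the two off-diagonal contributions combine to $2\sum_n|a_n-1|^{\gamma+1/2}$ in each of the $+$ and $-$ estimates, and adding the two estimates produces the factor $4$ on the off-diagonal term while collapsing $(b_n)_+^{\gamma+1/2}+(b_n)_-^{\gamma+1/2}$ to $|b_n|^{\gamma+1/2}$; the prefactor $3^{\gamma-1/2}$ is exactly what the power-mean inequality contributes, giving \eqref{AS}. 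The step I expect to be the main obstacle is not the bookkeeping but pinning down the off-diagonal control: the crude inequality $\pm A \le \tilde B$ must be the one fed into the sharp discrete Schr\"odinger estimate, since any looser bound on $A$ would inflate both the constant $3^{\gamma-1/2}$ and the factor $4$ and destroy the improvement; I would also need to check carefully that the min-max comparison of \emph{positive} eigenvalues is legitimate despite $W-2$ and $-D^*D+V$ having different (but both $[-2,2]$-type) continuous spectra.
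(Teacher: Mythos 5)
Your proposal is correct and follows essentially the same route as the paper: your bound $\pm A\le\tilde B$ obtained from the quadratic form via $2|a_n-1|\,|u(n)|\,|u(n+1)|\le|a_n-1|(|u(n)|^2+|u(n+1)|^2)$ is precisely the Hundertmark--Simon $2\times2$ comparison \eqref{impbound} that the paper invokes, your conjugation by $U\varphi(n)=(-1)^n\varphi(n)$ is the physical-space version of Lemma \ref{UnitEq}, and the final power-mean step with $p=\gamma+\tfrac12$ giving $3^{\gamma-1/2}$ and the factor $4$ is identical. The min-max comparison of eigenvalues above the common essential spectrum $[-4,0]$ that you flag is indeed legitimate and is used implicitly in the paper as well.
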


%%%%%%%%%
\begin{remark}
Comparing the constants in right hand sides of \eqref{HS2} and \eqref{AS} we note that the latter constant is around 1.1 times better.

\end{remark}

%%%%%%%%%%%%%%%%%%%%%%%%%%%%%%%%%%%%%%%%%%
%%%%%%%%%%%%%%%%%%%%%%%%%%%%%%%%%%%%%%%%%%

\section{Some auxiliary results}

%%%%%%%%%%%%%%%%%%%%%%%%%%%%%%%%%%%%%%%%%%%

\noindent
In order to prove our main results we consider some auxiliary statements. 
%%%%%%%%%%%%%

\begin{lemma}\label{agmon}
Let $\varphi,\, D\varphi \in l^2(\mathbb{Z})$. Then for any $n\in \mathbb Z$ 
        $$
        \mid\varphi(n)\mid^2  \leq  \| \varphi \|_{\ell^2} \| D\varphi\|_{\ell^2}
        $$
i.e.    $$
        \mid\varphi(n)\mid^2  \leq  \left(\sum_{k=-\infty}^\infty \mid\varphi(k)\mid^2\right)^{1/2}
        \left(\sum_{k=-\infty}^\infty\mid D\varphi(k)\mid^2\right)^{1/2}.
        $$
\end{lemma}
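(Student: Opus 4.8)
The plan is to reproduce, in the discrete setting, the standard proof of the continuous Agmon-type bound $|f(x)|^2 \le \|f\|_{L^2}\|f'\|_{L^2}$, where one writes $|f(x)|^2$ as the integral of $\frac{d}{dt}|f(t)|^2 = 2\,\mathrm{Re}\big(\overline{f(t)}f'(t)\big)$ either from $-\infty$ up to $x$ or from $x$ up to $+\infty$, and then averages the two representations. The first step is therefore to find the right discrete replacement for the product rule. A direct expansion of $D\varphi(k) = \varphi(k+1)-\varphi(k)$ gives the exact identity
$$
|\varphi(k+1)|^2 - |\varphi(k)|^2 = \mathrm{Re}\Big[D\varphi(k)\,\big(\overline{\varphi(k+1)} + \overline{\varphi(k)}\big)\Big],
$$
the cross terms $\varphi(k+1)\overline{\varphi(k)} - \varphi(k)\overline{\varphi(k+1)}$ being purely imaginary and hence killed by $\mathrm{Re}$. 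Keeping the symmetric factor $\varphi(k+1)+\varphi(k)$ is the crucial organizational choice.

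Since $\varphi \in \ell^2(\mathbb{Z})$ we have $\varphi(k)\to 0$ as $k\to\pm\infty$, and Cauchy--Schwarz shows that $\sum_k |D\varphi(k)|\big(|\varphi(k+1)| + |\varphi(k)|\big) \le 2\|D\varphi\|_{\ell^2}\|\varphi\|_{\ell^2} < \infty$, so every series below converges absolutely. For a fixed $n$ I would telescope from both ends,
$$
|\varphi(n)|^2 = \sum_{k=-\infty}^{n-1}\big(|\varphi(k+1)|^2 - |\varphi(k)|^2\big), \qquad |\varphi(n)|^2 = -\sum_{k=n}^{\infty}\big(|\varphi(k+1)|^2 - |\varphi(k)|^2\big),
$$
substitute the identity above, estimate $\mathrm{Re}[\,\cdot\,]$ by its modulus, and add. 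The two half-line sums then merge into a single sum over all of $\mathbb{Z}$:
$$
2\,|\varphi(n)|^2 \le \sum_{k=-\infty}^{\infty} |D\varphi(k)|\,\big(|\varphi(k+1)| + |\varphi(k)|\big).
$$

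Finally I would split the right-hand side into $\sum_k |D\varphi(k)||\varphi(k+1)|$ and $\sum_k |D\varphi(k)||\varphi(k)|$, apply Cauchy--Schwarz to each, and use that the two factors $\big(\sum_k |\varphi(k+1)|^2\big)^{1/2}$ and $\big(\sum_k |\varphi(k)|^2\big)^{1/2}$ both equal $\|\varphi\|_{\ell^2}$, which gives $2|\varphi(n)|^2 \le 2\|D\varphi\|_{\ell^2}\|\varphi\|_{\ell^2}$ and hence the lemma. The only real obstacle is the discrete product rule: unlike in the continuous case the increment of $|\varphi|^2$ is not exactly $2\,\mathrm{Re}(\overline{\varphi}\,D\varphi)$, and a naive asymmetric expansion together with a one-sided telescoping yields only the constant $2$. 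The symmetric identity, combined with averaging the two telescopings as in the continuous argument, is exactly what recovers the sharp constant $1$ claimed in the statement.
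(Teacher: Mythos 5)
Your proof is correct and follows essentially the same route as the paper: representing $|\varphi(n)|^2$ as the average of two telescoping sums (from $-\infty$ and from $+\infty$), bounding the summands by $|D\varphi(k)|\bigl(|\varphi(k+1)|+|\varphi(k)|\bigr)$, and finishing with Cauchy--Schwarz applied to the two resulting sums. Your symmetric real-part identity is in fact a slightly cleaner treatment of the complex-valued case than the paper's use of $D(\varphi^2)$, but it is the same argument.
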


%%%%%%%%%%%%%%%%%%%%%%%%%%
 
\begin{proof}
For any $n\in\mathbb Z$ we have
\begin{eqnarray*}
       |\varphi(n)|^2 & = & 1/2\left|\sum_{k=-\infty}^{n}D(\varphi^2(k))-\sum_{k=n+1}^{\infty}D(\varphi^2(k))\right|\\
                      & \leq & 1/2 \Big(\sum_{k=-\infty}^{n}\left|D(\varphi^2(k))\right|+\sum_{k=n+1}^{\infty}\left|D(\varphi^2(k))\right| \Big)\\
                      & = & 1/2\sum_{k=-\infty}^{\infty}\left|\varphi^2(k+1)-\varphi^2(k)\right|\\
                      & = & 1/2\sum_{k=-\infty}^{\infty}|D\varphi(k)|\Bigl(|\varphi(k+1)|+
                            |\varphi(k)|\Bigr).
\end{eqnarray*}
Now we apply the Cauchy-Schwarz inequality and obtain
\begin{multline*}             
  |\varphi(n)|^2 \leq 1/2\sum_{k=-\infty}^{\infty}|D\varphi(k)|\Bigl(|\varphi(k+1)|+|\varphi(k)|\Bigr)\\
       \leq  1/2\left(\sum_{k=-\infty}^{\infty}|D\varphi(k)|^2\right)^{1/2}
                \left[\left(\sum_{k=-\infty}^{\infty}|\varphi(k+1)|^2\right)^{1/2}+
                \left(\sum_{k=-\infty}^{\infty}|\varphi(k)|^2\right)^{1/2}\right]\\
       =  \Bigl(\sum_{k=-\infty}^\infty|D\varphi(k)|^2\Bigr)^{1/2}\Bigl(\sum_{k=-\infty}^\infty |\varphi(k)|^2\Bigr)^{1/2}.
\end{multline*}
The proof is complete.
\end{proof}

\noindent
The next result is a discrete version of the result obtained by Eden and Foias in \cite{Ede}.

%%%%%%%%%%%%%%%%%%%%%%%%%%%%%%%
%
\begin{lemma}[Discrete Generalised Sobolev Inequality]\label{DGSI}
Let $\{\psi_j\}_{j=1}^N$ be an orthonormal system of function in $l^2(\mathbb Z)$
and let $\rho(n) = \sum_{j=1}^N \psi^2_j(n)$.
Then
\begin{equation*}
\sum_{n\in\Bbb Z} \rho^3(n)\, = \sum_{n\in\Bbb{Z}} \Bigl(\sum_{j=1}^N |\psi_j(n)|^2\Bigr)^3 \,
 \le \sum_{j=1}^N \sum_{n\in\Bbb{Z}} |{D\psi}_j(n)|^2\, .
\end{equation*}
\end{lemma}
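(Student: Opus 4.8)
The plan is to carry out the discrete analogue of the Eden--Foias argument, exploiting the orthonormality of $\{\psi_j\}$ through the reproducing kernel of the orthogonal projection onto their span. For each fixed $n\in\mathbb Z$ I would introduce the single auxiliary function
\begin{equation*}
u_n(m) = \sum_{j=1}^N \overline{\psi_j(n)}\,\psi_j(m), \qquad m\in\mathbb Z,
\end{equation*}
which is a finite linear combination of the $\psi_j$ and hence lies in $l^2(\mathbb Z)$. The whole proof rests on two elementary identities coming from orthonormality: evaluating at $m=n$ gives $u_n(n)=\sum_{j=1}^N|\psi_j(n)|^2=\rho(n)$, while
\begin{equation*}
\|u_n\|_{l^2}^2=\sum_{m}\sum_{j,k}\overline{\psi_j(n)}\,\psi_k(n)\,\psi_j(m)\,\overline{\psi_k(m)}
=\sum_{j,k}\overline{\psi_j(n)}\,\psi_k(n)\,\langle\psi_j,\psi_k\rangle=\rho(n),
\end{equation*}
since $\langle\psi_j,\psi_k\rangle=\delta_{jk}$.

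Next I would apply Lemma \ref{agmon} to $u_n$. (For complex-valued $u_n$ one first applies the lemma to $|u_n|$ and notes $\|D|u_n|\|_{l^2}\le\|Du_n\|_{l^2}$; in the intended application the $\psi_j$ may be taken real-valued.) This yields $\rho(n)^2=|u_n(n)|^2\le\|u_n\|_{l^2}\|Du_n\|_{l^2}=\rho(n)^{1/2}\|Du_n\|_{l^2}$, and hence the pointwise bound
\begin{equation*}
\rho(n)^3\le\|Du_n\|_{l^2}^2 \qquad\text{for every } n\in\mathbb Z.
\end{equation*}

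Finally I would sum this inequality over $n$ and evaluate the right-hand side. Since $Du_n(m)=\sum_{j}\overline{\psi_j(n)}\,D\psi_j(m)$, expanding the square gives $\|Du_n\|_{l^2}^2=\sum_{j,k}\overline{\psi_j(n)}\,\psi_k(n)\,\langle D\psi_j,D\psi_k\rangle$; summing over $n$ and again using $\sum_n\psi_k(n)\overline{\psi_j(n)}=\delta_{jk}$ collapses the double sum to $\sum_{j}\|D\psi_j\|_{l^2}^2$, which is exactly the claimed right-hand side. The main thing to get right is this bookkeeping: orthonormality must be invoked twice, once to produce the two identities for $u_n$ and once more after the final summation to annihilate the off-diagonal terms. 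I do not expect a genuine analytic obstacle, since the discrete Agmon inequality of Lemma \ref{agmon} is precisely the substitute for the fundamental-theorem-of-calculus estimate used in the continuous Eden--Foias proof; the only points needing care are the convergence of the sums (one may assume $\sum_{j}\|D\psi_j\|_{l^2}^2<\infty$, as otherwise there is nothing to prove) and the interchange of the $m$- and $n$-summations in the last step.
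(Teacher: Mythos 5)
Your proof is correct and follows essentially the same route as the paper: the reproducing-kernel function $u_n$ you introduce is exactly the paper's linear combination $\sum_j \xi_j\psi_j$ with the choice $\xi_j=\psi_j(n)$, and both arguments apply Lemma \ref{agmon} to it and then invoke orthonormality twice (to identify $\|u_n\|_{l^2}^2$ with $\rho(n)$ and to collapse the double sum after summing over $n$). The only differences are presentational.
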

%%%%%%%%%%%%%%%%%%%%%%%%%%%%%%

\begin{proof}
Let $\xi=(\xi_1, \xi_2, \dots, \xi_N)\in {\Bbb C}^N$. Then
by Lemma \ref{agmon} we obtain that for every $n\in\mathbb{Z}$:
        \begin{eqnarray*}
        \Big|\sum_{j=1}^N \xi_j \psi_j(n)\Big| & \le & \Bigl(\sum_{j,k=1}^N \xi_j\bar\xi_k\langle\psi_j,\psi_k\rangle\Bigr)^{1/4}\Bigl(\sum_{j,k=1}^N                                                        \xi_j\bar\xi_k \langle D\psi_j,D\psi_k\rangle\Bigr)^{1/4}\\
                                               & \le & \Bigl(\sum_{j=1}^N \xi_j^2\Bigr)^{1/4}\Bigl(\sum_{j,k=1}^n                                                        \xi_j\bar\xi_k \langle D\psi_j,D\psi_k\rangle\Bigr)^{1/4}.
        \end{eqnarray*}

\noindent 
If we set $\xi_j = \psi_j(n)$ then the latter inequality becomes
$$
\rho(n) = \sum_{j=1}^N |\psi_j(n)|^2\le \rho^{1/4}(n)
\Bigl(\sum_{j,k=1}^N \psi_j(n)\overline{\psi_k(n)} \langle D\psi_j,D\psi_k\rangle\Bigr)^{1/4}.
$$
Thus 
$$
\rho^3(n)\le \sum_{j,k=1}^N \psi_j(n)\overline{\psi_k(n)} \langle D\psi_j,D\psi_k\rangle.
$$
If we sum both sides also taking into account that $\{\psi_j(n)\}$ is an orthonormal system, then
we arrive at 
$$
\sum_{n\in\Bbb{Z}}\Bigl(\sum_{j=1}^N |\psi_j(n)|^2\Bigr)^3\,\le 
\sum_{j=1}^N \Bigl(\sum_{n\in\Bbb{Z}} |D\psi_j(n)|^2 \,\Bigr).
$$
\end{proof}

%%%%%%%%%%%%%%%%%%%%%%%%%%%%

\begin{lemma}\label{UnitEq}
Discrete Schr\"odinger operators 
$$
-D^*D + b_n \quad {\mathrm and} \quad D^*D -4 + b_n
$$
are unitary equivalent.
\end{lemma}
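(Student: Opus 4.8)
The plan is to produce an explicit unitary operator that conjugates one operator into the other. The candidate is the sign-flip multiplication operator $U$ on $\ell^2(\mathbb{Z})$ given by $(U\varphi)(n) = (-1)^n \varphi(n)$. First I would verify that $U$ is unitary: since $|(-1)^n| = 1$ for all $n$, the operator $U$ is an isometry, and $U^2 = I$, so $U$ is a self-adjoint involution with $U^{-1} = U$. This reduces the task to checking a single operator identity.

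Next I would conjugate the free part. Using $D^*D\varphi(n) = -\varphi(n+1) - \varphi(n-1) + 2\varphi(n)$, I apply $-D^*D$ to $U\varphi$ and then multiply by $(-1)^n$ once more. Because $(-1)^{n\pm 1} = -(-1)^n$, the two off-diagonal shifts change sign while the diagonal term does not, so that
$$
\bigl(U(-D^*D)U\varphi\bigr)(n) = -\varphi(n+1) - \varphi(n-1) - 2\varphi(n).
$$
The key observation is that the right-hand side equals $(D^*D - 4)\varphi(n)$, since $D^*D\varphi(n) - 4\varphi(n) = -\varphi(n+1) - \varphi(n-1) + 2\varphi(n) - 4\varphi(n)$. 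Hence $U(-D^*D)U = D^*D - 4$.

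Finally, the potential term is harmless: $b_n$ is itself a multiplication operator and therefore commutes with $U$, giving $U b_n U = b_n$. Combining the two computations yields $U(-D^*D + b_n)U = D^*D - 4 + b_n$, the claimed unitary equivalence. I do not expect a genuine obstacle here beyond careful sign bookkeeping; the only real content is identifying the correct unitary, and the conceptual reason it works is the reflection symmetry $\lambda \mapsto 4-\lambda$ of the spectrum $\sigma(D^*D) = [0,4]$, which swaps the two band edges and thus interchanges the bottom and the top of the spectrum.
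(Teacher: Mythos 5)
Your proof is correct, and the computation checks out: with $(U\varphi)(n)=(-1)^n\varphi(n)$ one gets $U(-D^*D)U\varphi(n)=-\varphi(n+1)-\varphi(n-1)-2\varphi(n)=(D^*D-4)\varphi(n)$, and $U$ commutes with the multiplication operator $b_n$. The paper proves the lemma differently on the surface: it passes to the Fourier transform, where $D^*D$ becomes multiplication by $2(1-\cos\theta)$, and implements the equivalence by the translation $\hat\psi(\theta)=\hat\varphi(\theta+\pi)$, which flips the sign of $\cos\theta$ and is checked at the level of quadratic forms (using periodicity of $\hat b$ to see the potential term is unaffected). These are in fact the same unitary in two guises --- translation by $\pi$ in the angle variable is exactly modulation by $e^{i\pi n}=(-1)^n$ in position space --- but your version is the cleaner one: it is a two-line pointwise identity requiring no Fourier analysis, no form language, and no periodicity argument for $\hat b$, whereas the paper's displayed formulas contain some sign and factor slips (e.g.\ writing $-2-\cos\theta$ where $-2-2\cos\theta$ is meant) that your direct computation sidesteps entirely. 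Your closing remark that the unitary realizes the spectral reflection $\lambda\mapsto 4-\lambda$ of $\sigma(D^*D)=[0,4]$ is also the right conceptual summary.
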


\begin{proof}
Indeed, let $\mathcal F$ be the Fourier transform
$$
\mathcal F\varphi(\theta) = \hat\varphi(\theta) = \sum_{n=-\infty}^\infty \varphi(n) e^{in\theta}, 
\qquad \theta\in (0,2\pi).
$$
Then
\begin{equation*} 
\mathcal F(D^*D -4 + b_n)\mathcal F^* \varphi(\theta) =
2(1-\cos\theta) - 4\big) \hat\varphi(\theta)  + \int_0^{2\pi} \hat{b}(\theta -\tau) 
\hat\varphi(\tau) \,d\tau.
\end{equation*}
Therefore using periodicity of $\hat{b}$ and denoting  $\hat\psi(\theta) = \hat\varphi(\theta + \pi)$ 
 we find
\begin{multline*} 
\int_0^{2\pi} (-2-\cos\theta) |\hat\varphi(\theta)|^2 + \int_0^{2\pi}\int_0^{2\pi} \hat{b}(\theta - \tau)
\hat\varphi(\tau)\overline{\hat\varphi(\theta)}\, d\tau d\theta\\
= \int_0^{2\pi} (-2+\cos\theta) |\hat\psi(\theta)|^2 + \int_0^{2\pi}\int_0^{2\pi} \hat{b}(\theta - \tau)
\hat\psi(\tau)\overline{\hat\psi(\theta)}\, d\tau d\theta.
\end{multline*}
Since $\mathcal F(-D^*D)\mathcal F^*$ is unitary equivalent to  $-2+\cos\theta$ and this proves the  lemma.
\end{proof}

%%%%%%%%%%%%%%%%%%%%%%%%%%%%%%%%%%%%%%%%%%
%%%%%%%%%%%%%%%%%%%%%%%%%%%%%%%%%%%%%%%%%%

\noindent
\section{ Proof of Theorems \ref{Lieb-Thirring} and \ref{AisLieb}}

%%%%%%%%%%%%%%%%%%%%%%%%%%%%%%%%%%%%%%%%%%
\noindent
We begin with the proof of Theorem \ref{Lieb-Thirring}.
\begin{proof}
\noindent
Let $\{\psi_j\}_{j=1}^N$ be the orthonormal system 
of eigenvectors corresponding to the negative eigenvalues $\{-e_j\}$
of the discrete Schr\"odinger operator:
$$
D^*D\psi_j - b_n\psi_j = -e_j \psi_j,
$$
where we assume that $b_n\geq0$ $\forall$ $n\in\Bbb{N}$.
\\
Then by using the latter result and H\"older's inequality we obtain just like before

$$
\sum_{n\in\Bbb{Z}}\Bigl(\sum_{j=1}^N |\psi_j|^2\Bigr)^3\, 
-\Bigl(\sum_{n\in\Bbb{Z}} b_n^{3/2}\, \Bigr)^{2/3} 
\Bigl(\sum_{n\in\Bbb{Z}}\Bigl(\sum_{j=1}^N |\psi_j|^2\Bigr)^3\, \Bigr)^{1/3}
$$
$$
\le \sum_{j=1}^N\Bigl(\sum_{n\in\Bbb{Z}}\Bigl(|D\psi_j|^2 - b_n|\psi_j|^2\Bigr)\,\Bigr) 
= -\sum_{j=1}^Ne_j.
$$
\noindent Denote 

$$
X= \Bigl(\sum_{n\in\Bbb{Z}}\Bigl(\sum_{j=1}^N |\psi_j|^2\Bigr)^3\,\Bigr)^{1/3},
$$
then the latter inequality can be written as

$$
X^3 - \Bigl(\sum_{n\in\Bbb{Z}} b_n^{3/2}\, \Bigr)^{2/3} X \le -\sum_{j=1}^Ne_j \label{dlt1}.
$$
Maximising the left hand side we find 

$$
X = \frac{1}{\sqrt3}\Bigl(\sum b_n^{3/2}\,\Bigr)^{1/3}.
$$
We substitute this back into \eqref{dlt1}:
\begin{eqnarray*}
        -\sum_{j=1}^Ne_j & \geq & \frac{1}{3\sqrt3}\sum_{n\in\Bbb{Z}} b_n^{3/2}\,-\frac{1}{\sqrt3}\sum_{n\in\Bbb{Z}}b_n^{3/2}\\
                         &   =  & -\frac{2}{3\sqrt3}\sum_{n\in\Bbb{Z}} b_n^{3/2}
\end{eqnarray*}

\noindent and we finally obtain the discrete version of the Lieb-Thirring Inequality:
\begin{equation}\label{eq:lieb}
\sum_{j=1}^N\,|e_j|\le\, \frac{2}{3\sqrt3}\,\,\sum_{n\in\Bbb{Z}}\, b_n^{3/2}
\end{equation}
The proof is complete.
\end{proof}

%%%%%%%%%%%%%%%%%%%%%%%%%%%%%%%%%%%%%
\noindent
Let $\mathcal B$ be a Beta-function
$$
\mathcal B(x,y) = \int_0^1 t^{x-1} (1-t)^{y-1}\, dt.
$$

\noindent
{\it Proof of Theorem \ref{AisLieb}}

\noindent
Let $\mathcal B$ be a Beta-function
$$
\mathcal B(x,y) = \int_0^1 \tau^{x-1} (1-\tau)^{y-1}\, d\tau.
$$
Then  by scaling we obtain that for any $\gamma>1$ and $\mu\in\mathbb R$
$$
\mu^\gamma_+= \mathcal B^{-1}(\gamma-1,2)\,  \int_0^\infty \, \tau^{\gamma-2} (\mu-\tau)_+ d\tau.
$$
Let $e_j(\tau)$ be the eigenvalues of the operator $D^*D -(b_n - \tau)_+$. Then by variational principle
for the negative eigenvalues $-(e_j -\tau)_+$ of the operator $D^*D -b_n + \tau$ we have  
$$
(e_j - \tau)_+ \leq e_j(\tau).
$$
Therefore for any $\gamma> 1$ applying Theorem \ref{Lieb-Thirring} to the the operator 
$D^*D -(b_n - \tau)_+$ we find
\begin{multline*}
\sum_j e_j^\gamma = \mathcal B^{-1}(\gamma-1,2)\, \int_0^\infty \sum_j(e_j-\tau)_+ \, \tau^{\gamma-2} d\tau\\
\leq \mathcal B^{-1}(\gamma-1,2)\, \int_0^\infty \sum_j e_j(\tau)_+ \, \tau^{\gamma-2} d\tau\\
\leq    \frac{2}{3\sqrt3}\,\mathcal B^{-1}(\gamma-1,2)\, \int_0^\infty \sum_n   \tau^{\gamma-2} 
(b_n - \tau)_+^{3/2}d\tau\\
=   \frac{2}{3\sqrt3}\,\mathcal B^{-1}(\gamma-1,2)\, \mathcal B(\gamma-1, 5/2) \, \sum_n b_n^{\gamma + 1/2}\\
%=\frac{2}{3\sqrt{3}}\frac{3\pi}{2}L^{cl}_{\gamma,1}\\
=\frac{\pi}{\sqrt{3}}L^{cl}_{\gamma,1}\sum_n b_n^{\gamma + 1/2}
\end{multline*}
which proves the required statement.

%%%%%%%%%%%%%%%%%%%%%%%%%%%%%%%%%%%%%%%%%%%%%%%
%%%%%%%%%%%%%%%%%%%%%%%%%%%%%%%%%%%%%%%%%%%%%%%

\section{Proof of Theorem \ref{main} }

%%%%%%%%%%%%%%%%%%%%%%%%%%%%%%%%%%%%%%%%%

Let us introduce notations
$$
(b_n)_+=\max (b_n, 0), \qquad  (b_n)_-= - \min (b_n, 0)
$$
and let
$$
W(\{a_n\},\{b_n\})u(n) = Wu (n) =a_{n-1}u(n-1)+b_nu(n)+a_nu(n+1).
$$
Note that using this notation and Lemma \ref{UnitEq} we have
\begin{equation}\label{W=D1}
D^*D + b_n = W(\{a_n\equiv1\},\{b_n +2 \}), 
\end{equation}
\begin{equation}\label{W=D2}
-D^*D + b_n = W(\{a_n\equiv -1\},\{b_n - 2\}).
\end{equation}
Hundertmark and Simon \cite{Hun} made an observation, which allows us to use bounds for
$a_n\equiv1$ for the general case. Namely for any $a_n\in \mathbb R$
\begin{equation}\label{impbound}
\left(
        \begin{array}{cc}
        -|a_n-1| & 1        \\
        1        & -|a_n-1| 
        \end{array} 
\right) \leq 
\left(
        \begin{array}{cc}
        0        & a_n      \\
        a_n      & 0 
        \end{array} 
\right) \leq
\left(
        \begin{array}{cc}
        |a_n-1|  & 1        \\
        1        & |a_n-1| 
        \end{array} 
\right)
\end{equation} 
Thus the above bound implies by repeated use at each point of indices:
\begin{equation}\label{impbound}
W(\{a_n\equiv1\},\{b_n^-\})\leq W(\{a_n\},\{b_n\})\leq W(\{a_n\equiv1\},\{b_n^+\})
\end{equation}
\noindent where $b_n^{\pm}$ is given by 
$$
b_n^{\pm}=b_n\pm(|a_{n-1}-1|+|a_n-1|).
$$
%%%%%%%%%%%%%%%%%%%%%%%%%%%%%%
Thus using \eqref{W=D1} and Theorem \ref{AisLieb} and the first inequality in \eqref{impbound} we have
\begin{equation}\label{Almost-final1}
\sum_{j=1}^{N_-}|E^{-}_j+2|^\gamma \leq
 \frac{\pi}{\sqrt 3}\,  L_{\gamma,1}^{cl}
 \sum_n  \Big((b_n)_{-} +  |a_{n-1}| +  |a_{n}|\Big)^{\gamma+1/2}.
\end{equation}
Similarly using \eqref{W=D2} and Corollary \ref{AisLieb+} and the second inequality in \eqref{impbound}
we find
\begin{equation}\label{Almost-final2}
\sum_{j=1}^{N_+}|E^{+}_j-2|^\gamma \leq
 \frac{\pi}{\sqrt 3}\, L_{\gamma,1}^{cl}
 \sum_n  \Big((b_n)_{+} +  |a_{n-1}| +  |a_{n}|\Big)^{\gamma+1/2}.
\end{equation}
Note that for any $q\geq1$,
\begin{equation}\label{Jensen}
(\alpha+\beta+\gamma)^q  =  3^q \Big(\frac{\alpha}{3}+\frac{\beta}{3}+\frac{\gamma}{3}\Big)^q
                        \leq 3^q (\alpha^q+\beta^q+\gamma^q).
\end{equation}
Applying \eqref{Jensen} to each of \eqref{Almost-final1} and \eqref{Almost-final2} and summing them up we finally arrive at
$$
\sum_{j=1}^{N_-} |E^{-}_{j}+2|^\gamma +
\sum_{j=1}^{N_+} |E^{+}_{j}-2|^\gamma\leq 3^{\gamma-1/2}\,\frac{\pi}{\sqrt{3}}\,L^{cl}_{\gamma,1} 
\left(\sum_n |b_n|^{\gamma+1/2}+4\sum_n|a_n-1|^{\gamma+1/2})\right).
$$
The proof of Theorem \ref{main} is complete.

\end{document}